\newif\ifdraft 
\DeclareFontFamily{U}{matha}{\hyphenchar\font45}
\DeclareFontShape{U}{matha}{m}{n}{
      <5> <6> <7> <8> <9> <10> gen * matha
      <10.95> matha10 <12> <14.4> <17.28> <20.74> <24.88> matha12
      }{}
\DeclareSymbolFont{matha}{U}{matha}{m}{n}
\DeclareMathSymbol{\hashsym}{\mathbin}{matha}{'043}
\theoremstyle:=definition,remark,plain\do{%
        \expandafter\g@addto@macro\csname th@\theoremstyle\endcsname{%
            \addtolength\thm@preskip\parskip
            }%
        }
\z@skip \halign{\relax\hfil\txtline@@{##}\hfil\cr\leavevmode#1\crcr}}}
\theoremstyle{definition}
\newtheorem{thm}{Theorem}[section]
\newtheorem{lem}[thm]{Lemma}
\newtheorem{cor}[thm]{Corollary}
\newtheorem{defn}[thm]{Definition}
\newtheorem{propn}[thm]{Proposition}
\newtheorem*{thm*}{Theorem}
\newtheorem{notn}[thm]{Notation}
\newtheorem*{qn*}{Question}
\newtheorem*{conj}{Conjecture}
\newtheorem{props}[thm]{Properties}
\newtheorem*{nts}{Note to self}
\theoremstyle{remark}
\newtheorem{rk}[thm]{Remark}
\newtheorem{rks}[thm]{Remarks}
\newtheoremstyle{custthm}{\parskip}{}{\normalfont}{}{\bfseries}{.}{ }{\thmname{#1} \thmnote{#3}}
\theoremstyle{custthm}
\newcommand{\Aut}{\mathrm{Aut}}
\newcommand{\Inn}{\mathrm{Inn}}
\newcommand{\Spec}{\mathrm{Spec}}
\newcommand{\invlim}{\underset{\longleftarrow}{\lim}}
\newcommand{\C}{\mathbf{C}}
\newcommand{\pr}{\mathrm{pr}}
\newcommand{\hash}{\mathbb{\hashsym}}
\newcommand\checkedbox{%
 \makebox[1.4em][c]{%
 \makebox[0pt][c]{\raisebox{.1em}{\small{\textbf{\ding{52}}}}}%
 \makebox[0pt][c]{\Large$\square$}}%
 }%
\newcommand{\checked}{%
 \ifdraft
 \hspace{-20pt}\checkedbox\vspace{-25pt}%
 \fi
 }%
\newif\ifdark
\newcommand{\globalcolor}[1]{%
  \color{#1}\global\let\default@color\current@color
}
\begin{document}

\numberwithin{equation}{section}
\binoppenalty=\maxdimen
\relpenalty=\maxdimen

\title{The conjugation action in completed group rings}
\author{William Woods\footnote{University of Essex, UK. Email address: \texttt{billywoods@gmail.com}.}}
\date{\today}
\maketitle
\begin{abstract}
Let $k = \mathbb{F}_p$ or $\mathbb{Z}_p$ (or finite extensions of these). Let $G$ be a $p$-valuable group, and form its completed group algebra $kG$. By analysing the conjugation action of $G$ on itself, we prove two structural results. Firstly, we show that all inner automorphisms of $kG$ that preserve $G$ are induced from inner automorphisms of $G$. Secondly, for a closed subgroup $\Gamma$ of $G$, we calculate the $\Gamma$-fixed ring of $kG/I$ under the conjugation action of $\Gamma$, for certain ideals $I$ induced from the $G$-centraliser of $\Gamma$.
\end{abstract}

\tableofcontents

%
%
%
%
%
%

\newpage

\section*{Introduction}

Throughout this paper, $p$ will denote a fixed prime number, and $\mathbb{Z}_p$ will be the ring (or additive group) of $p$-adic integers.

Our main objects of study are the completed group rings $kG$ of \emph{compact $p$-adic analytic groups} $G$ over certain profinite rings $k$ (usually $\mathbb{F}_p$ or $\mathbb{Z}_p$). The rings $kG$ are often known as \emph{Iwasawa algebras} in the literature, and play an important role in modern number theory.

Compact $p$-adic analytic groups (also known as compact $p$-adic Lie groups) may be most simply defined as closed subgroups of the topological group $GL_n(\mathbb{Z}_p)$: see e.g. \cite{DDMS, lazard} for references. In studying these groups, it is common first to study a smaller subclass consisting of particularly nice groups: common choices include \emph{uniform} \cite[\S 4]{DDMS} groups, or the slightly more general \emph{$p$-valuable} \cite[III, 3.1.6--7]{lazard} groups.

Both $p$-valuable and uniform groups are torsion-free pro-$p$ groups of finite rank with well-behaved filtrations, which makes them easier to study. Moreover, arbitrary compact $p$-adic analytic groups are ``almost" $p$-valuable (or uniform) in several important senses: for instance, given a compact $p$-adic analytic group $G$, we can always find an open (i.e. closed and finite-index) subgroup $G_0$ which is both $p$-valuable (or uniform) and normal in $G$ \cite[Corollary 8.34]{DDMS}. Having chosen such a $G_0$, we can form the ring $kG$ as a crossed product of the ring $kG_0$ with the finite group $G/G_0$: by doing this, we can often pass information from $kG_0$ to $kG$. Such descriptions have been concretely applied with fruitful results in previous work: see, for instance, \cite{woods-extensions-of-primes,woods-catenary, woods-prime-quotients, ardakovbrown}.

For this reason, in the current paper, we work primarily with $p$-valuable and uniform groups $G$. The most important properties of these groups are recalled for the convenience of the reader below in \S \ref{subsec: p-val groups} and \S \ref{subsec: uniform groups}.

There is an ongoing project to understand the (prime) ideals in these completed group rings $kG$: various precise questions were posed in the survey paper \cite[Questions G--O]{ardakovbrown}, and despite significant work on this front in the last decade, most of these questions still remain partly or entirely open. The most substantial results in this direction have been those of the paper \cite{ardakovInv}, which answers special cases of \cite[Questions N and O]{ardakovbrown}:

\hfill\begin{minipage}{\dimexpr\textwidth-1cm}
\emph{Let $G$ be a nilpotent uniform group with centre $Z$.
\begin{itemize}
\item Is every prime ideal of $\mathbb{F}_p G$ completely prime?
\item Is $\Spec(\mathbb{F}_p G)$ the disjoint union of finitely many commutative strata?
\item Is every faithful prime ideal of $\mathbb{F}_p G$ controlled by $Z$?
\end{itemize}
}
\end{minipage}

More recently, the results of the thesis \cite{jones-thesis} achieve positive results towards the more general case of \emph{soluble} uniform $G$.

Many of the questions in \cite{ardakovbrown} were posed exclusively in the (``\emph{modular}") case when $k = \mathbb{F}_p$. The (``\emph{integral}") case $k = \mathbb{Z}_p$ is often of even greater interest in areas of number theory, where Iwasawa algebras originated; and the study of the ideals $I$ of $\mathbb{F}_pG$ may be viewed as the study of the ideals $J$ of $\mathbb{Z}_pG$ which contain $p$, so that the modular case is a special case of the integral case. However, the integral case appears to be considerably more difficult, and currently very little is known about ideals of $\mathbb{Z}_pG$ which do not contain $p$. For some recent developments, see \cite[Theorems C and E]{jones-thesis} and \cite{jones-woods-1}.

In all of the below results, $k$ will denote \emph{either} a finite field of characteristic $p$ \emph{or} a complete discrete valuation ring whose residue field has characteristic $p$.

\subsection{Inner automorphisms}

Our first results concern automorphisms of $kG$.

\textbf{Theorem A.} Let $G$ be a $p$-valuable group, and let $\sigma$ be an automorphism of $kG$ that preserves the naturally embedded copy of $G$. Then $\sigma$ is the (unique) extension of an inner automorphism of $G$.\qed

The importance of this result stems from an analysis of the behaviour of skew power series rings $R[[x;\sigma,\delta]]$ (see \cite{schneider-venjakob-codim,letzter-noeth-skew,woods-SPS-dim} for the definition and basic properties of these rings in the Iwasawa context). Forthcoming joint work of the current author and Adam Jones shows that the behaviour of $R[[x;\sigma,\delta]]$ varies depending on whether $\sigma$ is inner or outer. A similar dichotomy has also been observed in the world of skew \emph{polynomial} rings $R[x;\sigma,\delta]$, e.g. \cite[Theorem 3.6]{LerMat92}.

If $G$ is a $p$-valuable group and $H$ is a closed normal subgroup such that $G/H\cong \mathbb{Z}_p$, choose an arbitrary topological generator $gH$ for $\mathbb{Z}_p$: that is, choose an element $g\in G$ such that $G = \overline{\langle H, g\rangle}$. Then $kG$ can be written as a skew power series ring $kH[[x;\sigma,\sigma-\mathrm{id}]]$, where $x = g - 1$ and $\sigma$ is conjugation by $g$ \cite[\S 4]{schneider-venjakob-codim}. Hence $\sigma$ is an automorphism of $kH$ which preserves the naturally embedded copy of $H$, and so if it is inner, we can apply Theorem A to obtain the following simplification.

\textbf{Corollary B.} Let $G$ be a $p$-valuable group and $H$ a closed normal subgroup such that $G/H\cong \mathbb{Z}_p$. Choose an element $g\in G$ such that $G = \overline{\langle H, g\rangle}$. If the conjugation action of $g$ on $kH$ is inner, then $G \cong H\times \mathbb{Z}_p$.\qed

\subsection{Fixed rings}

Next, we turn our attention to subrings of $kG$ fixed under the conjugation action of a closed subgroup $\Gamma\leq G$.

\textbf{Theorem C.}
Let $G$ be a $p$-valuable group, $\Gamma$ an arbitrary closed subgroup which we consider acting on $G$ by conjugation, and $H$ the centraliser in $G$ of $\Gamma$. If $I$ is a faithful $G$-prime ideal of $kH$, then the set of $\Gamma$-invariant elements of $kG/IkG$ is precisely $(kH+IkG)/IkG$.\qed

Note that, under this setup, $kG/IkG$ is a ring, as $H$ is a \emph{normal} subgroup: see Lemma \ref{lem: centralisers are normal} below. Note also that we may identify the subring $(kH + IkG)/IkG \leq kG/IkG$ with the ring $kH/I$. (Indeed, $kG$ is a \emph{flat} $kH$-module by \cite[Lemma 4.5]{brumer}, and is \emph{faithfully} flat by \cite[Proposition 7.2.3]{MR} as the maximal ideal of $kG$ contains the maximal ideal of $kH$. Now it follows that $IkG\cap kH = I$, as in \cite[Lemma 5.1]{ardakovInv}.)

This result, proved below as Theorem \ref{thm: fixed ring over F_p}, is an analogue of Roseblade's \cite[Lemma 10]{roseblade}, which was used to answer similar questions about group algebras of polycyclic groups. While Theorem A is not quite as general as \cite[Lemma 10]{roseblade}, in practice it is general enough to be used in an analogous context:

\textbf{Corollary D.}
Let $G$ be a supersoluble $p$-valuable group, and $Z$ its centre. Then $kG$ is centrally separated if and only if all its faithful prime ideals are controlled by $Z$.\qed

Here, by \emph{centrally separated}, we mean the following: if $P\lneq Q$ are two elements of $\Spec(kG)$, then $Q/P$ contains a nonzero \emph{central} element of $kG/P$. An ideal $I\lhd kG$ is \emph{faithful} if $G$ embeds naturally into $kG/I$, i.e. if the kernel of the natural map $G\to (kG)^\times \to (kG/I)^\times$ is trivial.

This result is proved in \S \ref{subsec: ideal separation}. Of course, the question of when $kG$ satisfies either condition remains open in general: to our knowledge, the only case that is known is that of \cite{ardakovInv}. Still, we explore the potential significance of this result in the larger context, and give some conjectures.

\subsection{Ideals and normal subgroups}

Finally, in \S \ref{subsec: parallels}, we provide a first result linking the ideal structure of $\mathbb{Z}_pG$ with the normal subgroup structure of $G$:

\textbf{Theorem E.}
Let $G$ be a soluble $p$-valuable group. If $\mathbb{Z}_p G$ is polycentral, than $G$ is in fact nilpotent. \qed

This mirrors similar results for group algebras of polycyclic-by-finite groups, as well as finite-dimensional Lie algebras: see \S \ref{subsec: parallels} for a discussion.

\section{Preliminaries}

\subsection{$p$-valuable groups}\label{subsec: p-val groups}

As mentioned in the introduction, every compact $p$-adic analytic group $G$ contains an open $p$-valuable normal subgroup $G_0$, and $p$-valuable groups are in some sense much better behaved. We recall some of the basic properties of $p$-valuable groups.

\begin{defn}
If $H$ is a closed subgroup of $G$, then $H$ is called \emph{orbital} if it has finitely many $G$-conjugates. An orbital closed subgroup is called \emph{isolated} if, whenever $H'$ is an orbital closed subgroup of $G$ strictly containing $H$, we have $[H':H]=\infty$. \cite[Definition 1.4]{woods-struct-of-G}.
\end{defn}

\begin{props}\label{props: p-valuable groups}
Let $G$ be a $p$-valuable group, as defined in \cite[III, 3.1.6]{lazard}.

\begin{enumerate}[label=(\roman*)]
\item $G$ is a torsion-free pro-$p$ group of finite rank. \cite[III, 2.1.2]{lazard}
\item $G$ is \emph{orbitally sound} \cite[Proposition 5.9]{ardakovInv}, i.e. if $H$ is any orbital closed subgroup of $G$, then its normal core $H^\circ := \bigcap_{g\in G} H^g$ has finite index in $H$. In particular, this implies that its isolated orbital closed subgroups are normal in $G$ \cite[Lemma 1.10]{woods-struct-of-G}. Hence an orbital closed subgroup $H$ of $G$ is isolated if and only if it is normal and $G/H$ is torsion-free (compare \cite[\S 5.3]{ardakovInv}), and by (i), this means that an orbital closed subgroup $H$ is isolated if and only if it is normal and $x\in G$, $x^p\in H\implies x\in H$.
\item Closed subgroups of $G$ are $p$-valuable \cite[III, 2.1.2]{lazard}.
\item If $H$ is an isolated normal closed subgroup of $G$, then $G/H$ is $p$-valuable. \cite[IV, 3.4.2]{lazard}
\item If $x,y\in G$ satisfy $x^p = y^p$, then $x = y$. \cite[III, 2.1.4]{lazard}
\end{enumerate}
\end{props}

\begin{lem}\label{lem: centralisers are normal}\label{lem: G/H is p-val}
Let $G$ be a $p$-valuable group, $\Gamma$ an arbitrary normal closed subgroup which we consider acting on $G$ (on the right) by conjugation, and $H$ the centraliser of $\Gamma$ in $G$. Then $H$ is an isolated normal closed subgroup of $G$, and $G/H$ is $p$-valuable.
\end{lem}

\begin{proof}
Given a sequence $h_n\in H$ converging to $h\in G$, we must have $\gamma = h_n^{-1}\gamma h_n\to h^{-1}\gamma h$ for all $\gamma\in \Gamma$, showing that $h$ centralises $\Gamma$. Hence $h\in H$, and so as the convergent sequence $(h_n)$ was arbitrary, $H$ must be closed in $G$. The normality of $H$ follows from the normality of $\Gamma$.

To show that $H$ is isolated using the condition given in Property \ref{props: p-valuable groups}(ii), we need only show that, if $h^p\in H$ for some $h\in G$, then in fact $h\in H$. So let $\gamma\in \Gamma$ be arbitrary: then we know that $\gamma^{-1}h^p\gamma = h^p$, i.e. $(\gamma^{-1}h\gamma)^p = h^p$, and so by Property \ref{props: p-valuable groups}(v) we get $\gamma^{-1}h\gamma = h$, i.e. $h$ centralises $\Gamma$. So $H$ is isolated, and $G/H$ is $p$-valuable by Property \ref{props: p-valuable groups}(iv).
\end{proof}

\subsection{Profinite spaces, groups and rings}

Our groups and rings are all profinite, so we recall the basics of inverse limits necessary for our purposes: cf. \cite[\S 1]{DDMS}, \cite[\S 1]{wilson}.

\begin{defn}
Let $(I,\leq)$ be a directed set, i.e. a nonempty partially ordered set with the property that, for all $i,j\in I$, there exists $k\in I$ such that $k \geq i$ and $k \geq j$.

Let $\mathcal{C}$ be the category of topological spaces, topological groups or topological rings. An \emph{inverse system} in $\mathcal{C}$ (over $I$) is a collection of objects $A_i$ for all $i\in I$, together with a collection of morphisms $\pi_{ij}: A_i \to A_j$ for all $i,j\in I$ satisfying $i \geq j$, with the property that $\pi_{ik} = \pi_{jk}\circ \pi_{ij}$ for all $i\geq j\geq k$: we will usually write this inverse system simply as $(A_i)_{i\in I}$, omitting the $\pi_{ij}$ if they are known from context.

The \emph{inverse limit} of the inverse system $(A_i)_{i\in I}$ is denoted $\invlim_{i\in I} A_i$, and is defined to be the object
$$\underset{i\in I}{\invlim}\, A_i := \left\{(a_i)_{i\in I} \in \prod_{i\in I} A_i : \pi_{jk}(a_j) = a_k \text{ for all } j \geq k\right\}.$$
This comes equipped with natural projection maps $\varphi_j: \invlim_{i\in I} A_i \to A_j$, and is given the coarsest topology making all the projection maps $\varphi_j$ continuous, making it again an object of $\mathcal{C}$.

Finally, if the $A_i\in\mathcal{C}$ are finite and discrete, then $\invlim_{i\in I} A_i\in\mathcal{C}$ is called \emph{profinite}.
\end{defn}

\begin{props}\label{props: profinite objects}
$ $

\begin{enumerate}[label=(\roman*)]
\item Let $(A_i)_{i\in I}$ be an inverse system of finite discrete objects in $\mathcal{C}$. Then $\invlim_{i\in I} A_i$ is a closed subobject of $\prod_{i\in I} A_i$ (cf. \cite[proof of Proposition 1.3]{DDMS}).
\item Suppose that $J\subseteq I$ is a directed set. Then there is a canonical morphism $\invlim_{i\in I} A_i \to \invlim_{j\in J} A_j$. Moreover, if $J$ is \emph{cofinal} with $I$, i.e. for all $i\in I$ there exists $j\in J$ such that $j\geq i$, then this canonical map is an isomorphism in $\mathcal{C}$.
\item If $G$ is a profinite group, then $G\cong \invlim_{N} (G/N)$, where $N$ ranges over the inverse system of open normal subgroups of $G$. The partial order of this directed set is reverse containment, i.e. $N_2 \geq N_1$ if $N_2 \subseteq N_1$, and the morphisms $G/N_2 \to G/N_1$ of this inverse system are the natural projection homomorphisms.
\end{enumerate}
\end{props}

\subsection{Topological rings}

Let $R$ be a ring equipped with a topology $\mathcal{T}$. We say that $\mathcal{T}$ is a \emph{ring topology} on $R$ if the addition and multiplication maps $R\times R\to R$ and the negation map $R\to R$ are all continuous with respect to $\mathcal{T}$, and when this is the case, we say $R$ is a \emph{topological ring} (with respect to $\mathcal{T}$). All rings of interest in this paper are profinite (see Remark \ref{rk: kG is profinite} below), and hence carry the profinite topology \cite[\S 1]{wilson}.

Our standard reference for general topological rings is \cite{warner}. 

\checked\begin{lem}\label{lem: cpt t.d. rings}
Let $R$ be a compact, totally disconnected (topological) ring and $I$ a closed ideal. Then $R/I$ is a compact, totally disconnected ring.
\end{lem}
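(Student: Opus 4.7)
The plan is to verify in turn that $R/I$, equipped with the quotient topology, is (i) a topological ring, (ii) compact, (iii) Hausdorff, and (iv) totally disconnected. The first three are standard and I would dispatch them quickly; the fourth is the only point requiring care.

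For (i), the quotient map $\pi:R\to R/I$ is continuous, open, and surjective, and the usual diagram-chase shows that addition and multiplication descend to continuous operations on $R/I$. For (ii), $R/I=\pi(R)$ is the continuous image of a compact space. For (iii), since $I$ is closed, the diagonal in $R/I\times R/I$ is the image of the closed set $\{(x,y)\in R\times R : x-y\in I\}$ under the (open, hence quotient) map $\pi\times\pi$, so $R/I$ is Hausdorff; alternatively, one can quote the standard fact that a topological group modulo a closed normal subgroup is Hausdorff.

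For (iv), the cleanest route is via the characterisation of compact Hausdorff totally disconnected topological rings as precisely those admitting a neighbourhood basis of $0$ consisting of open (two-sided) ideals — this is a standard result from Warner's book. Granted this characterisation for $R$, let $\{H_\alpha\}$ be such a basis of open ideals of $R$ at $0$. Since $\pi$ is an open map, each $\pi(H_\alpha)$ is an open ideal of $R/I$, and the family $\{\pi(H_\alpha)\}$ is a neighbourhood basis of $\bar{0}$ in $R/I$ (any open neighbourhood of $\bar{0}$ pulls back to an open neighbourhood of $0$ in $R$, which contains some $H_\alpha$). Applying the same characterisation in reverse to $R/I$ — which we have already shown to be compact and Hausdorff — yields that $R/I$ is totally disconnected.

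The main (and only) obstacle is the implicit appeal to the structure theorem for compact totally disconnected topological rings. If one preferred to avoid citing it, one could instead argue directly that $R/I$ is zero-dimensional by lifting separating clopen sets through $\pi$: given distinct $\bar{x},\bar{y}\in R/I$, use a clopen additive subgroup $H\leq R$ with $x-y\notin H+I$ (which exists because $R$ has a clopen subgroup basis at $0$ and $I$ is closed) to produce the clopen coset $\pi(H+x)$ separating them in $R/I$. Either way the argument is routine, and no hypotheses beyond compactness, total disconnectedness of $R$, and closedness of $I$ are needed.
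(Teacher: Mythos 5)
Your proposal is correct and follows the same route as the paper, which simply delegates both key points to Warner: the quotient topology being a ring topology and the total disconnectedness of $R/I$ (the latter via exactly the open-ideal-basis characterisation you invoke). Your write-up just fills in the details that the paper leaves to the citations of \cite[Theorems 5.4 and 5.17]{warner}.
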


\begin{proof}
Clearly $R/I$ is a ring, and is compact as a quotient space. By \cite[Theorem 5.4]{warner}, the topology on $R/I$ is indeed a ring topology, so that $R/I$ is a compact ring. Now it follows from \cite[Theorem 5.17(3)]{warner} that $R/I$ is totally disconnected.
\end{proof}

\checked\begin{lem}\label{lem: a profinite ring is cpt t.d.}
Profinite rings are compact and totally disconnected.
\end{lem}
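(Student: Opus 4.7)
The plan is to unpack the definition of a profinite ring as an inverse limit of finite discrete rings, and then argue that each of the properties in question (compactness, total disconnectedness) transfers from the finite pieces through the product and the inverse limit.

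First, I would write the profinite ring $R$ as the inverse limit $R = \varprojlim R_\alpha$ of an inverse system of finite discrete rings $R_\alpha$, with the topology induced from the product topology on $\prod_\alpha R_\alpha$. Each factor $R_\alpha$, being finite and discrete, is trivially compact Hausdorff and totally disconnected. By Tychonoff's theorem the product $\prod_\alpha R_\alpha$ is compact; it is also Hausdorff, and it is totally disconnected since any product of totally disconnected spaces is totally disconnected (the connected component of a point in the product is the product of connected components).

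Second, I would note that $R$ sits inside $\prod_\alpha R_\alpha$ as the subset of compatible tuples $(r_\alpha)$ with $\varphi_{\alpha\beta}(r_\beta) = r_\alpha$ for each transition map $\varphi_{\alpha\beta}$ in the inverse system. Since each $\varphi_{\alpha\beta}$ is continuous and each $R_\alpha$ is Hausdorff, this is an equaliser of continuous maps into Hausdorff spaces, so $R$ is closed in $\prod_\alpha R_\alpha$. A closed subspace of a compact space is compact, and any subspace of a totally disconnected space is totally disconnected, so $R$ inherits both properties.

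There is no real obstacle here; this is entirely a standard general-topology unpacking. The only point one might pause over is being sure that the definition of "profinite ring" being used coincides with the inverse-limit description (as opposed to, say, taking it to mean a compact, Hausdorff, totally disconnected topological ring from the outset, in which case the lemma is a tautology). Given the inverse-limit convention, the argument above, together with \autoref{lem: cpt t.d. rings} for the implicit fact that the inherited topology is a ring topology, completes the proof.
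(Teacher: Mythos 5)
Your proposal is correct and follows essentially the same route as the paper: realise the profinite ring as a closed subspace of a product of finite discrete rings and observe that compactness and total disconnectedness pass through products and closed subspaces (you merely spell out the Tychonoff and equaliser details that the paper leaves implicit). The only slight misstep is the closing appeal to \autoref{lem: cpt t.d. rings}, which concerns quotients rather than subspaces, but nothing in the argument actually depends on it.
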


\begin{proof}
By definition, a profinite ring $R$ is the inverse limit of an inverse system of discrete finite rings, and is hence a closed subspace of the product space. But discrete finite rings are compact and totally disconnected, and these properties are preserved under arbitrary products and closed subspaces.
\end{proof}

\checked\begin{lem}\label{lem: inv lim of quotients}
Let $R$ be a compact, totally disconnected ring. Suppose that $\mathcal{J}$ is a fundamental system of neighbourhoods of zero for $R$ consisting of open ideals, and let $I$ be an arbitrary closed ideal of $R$. Then the canonical ring homomorphism
$$R/I \to \underset{J\in\mathcal{J}}\invlim \left(R/(J+I)\right)$$
is an isomorphism of topological rings.
\end{lem}

\begin{proof}
By Lemma \ref{lem: cpt t.d. rings}, $R/I$ is a compact, totally disconnected ring. As $\mathcal{J}$ is a fundamental system of neighbourhoods of zero in $R$, by \cite[Theorem 5.5]{warner}, we have that $\{A_J \;:\; J\in\mathcal{J}\}$ is a fundamental system of neighbourhoods of zero in $R/I$, where we set $A_J := (J + I) / I$. Each $A_J$ is easily seen to be an open ideal in $R$, as it is the image of $J$ under the projection map $R\to R/I$, which is an open map (see e.g. \cite[Theorem 5.2]{warner}).

Now \cite[Theorem 5.23]{warner} implies that $R/I$ is topologically isomorphic to the inverse limit of the inverse system of rings $\{(R/I)/A_J \;:\; J\in\mathcal{J}\}$. But $(R/I)/A_J$ is naturally isomorphic to $R/(J + I)$ by standard isomorphism theorems. (That this topological isomorphism is canonical can be read off from \cite[Corollary 5.22]{warner} and the proof of \cite[Theorem 5.23]{warner}.)
\end{proof}

\subsection{Completed group algebras}

Let $G$ be a compact $p$-adic analytic group, and let $k$ be \emph{either} a finite field of characteristic $p$ \emph{or} a complete discrete valuation ring whose residue field is a finite field of characteristic $p$.

\begin{defn}\label{defn: completed group ring}
The \emph{completed group ring} of $G$ over $k$ is defined to be
$$kG := \underset{U}{\invlim}\;  k[G/U],$$
where $U$ ranges over the inverse system of open normal subgroups of $G$, and $k[G/U]$ is the usual group algebra of the (finite) group $G/U$ over the commutative ring $k$. The topological rings $\mathbb{Z}_pG$ and $\mathbb{F}_pG$ are sometimes called the (respectively \emph{integral} and \emph{modular}) \emph{Iwasawa algebras} of $G$. When it is notationally convenient, we may write $k[[G]]$ instead of $kG$; when $G$ is finite, we may write $k[G]$, as the completed group ring and the usual group ring are canonically isomorphic in this case.

The reader may also wish to consult \cite[Chapter 7]{wilson} for more on completed group rings of profinite groups over profinite rings in the general setting, or \cite[Ch. II, \S 2.2]{lazard} for more on integral Iwasawa algebras of $p$-valuable groups.
\end{defn}

\begin{rks}\label{rk: kG is profinite}
$ $

\begin{enumerate}[label=(\roman*)]
\item
$kG$ is always profinite. Indeed, if $k$ is finite, then each $k[G/U]$ is a finite ring. If $k$ is a complete discrete valuation ring with uniformiser $\pi$, then $kG$ may be identified with $\invlim (k/\pi^n k)[G/U]$, where the inverse limit ranges over the inverse system of open normal subgroups $U\lhd G$ and $n\geq 1$: see \cite[\S 7.1]{DDMS}. As the rings $(k/\pi^n k)[G/U]$ are all finite, it follows that $kG$ is profinite.
\item It follows from Lemma \ref{lem: a profinite ring is cpt t.d.} and (i) that $kG$ is compact and totally disconnected.
\item When $G$ is $p$-valuable, $kG$ admits a complete filtration by powers of its Jacobson radical, and the associated graded ring is noetherian: see e.g. \cite[\S 6.2]{ardakovInv} or \cite[III, \S 2.3]{lazard}. Hence by \cite[p. 85, Corollary 5]{LVO} every ideal of $kG$ is closed. Combined with (ii), this says that we may apply Lemma \ref{lem: inv lim of quotients} to arbitrary ideals $I$ of $kG$.
\item When $G$ is $p$-valuable, the subring $k[G]$ (the ordinary group ring of $G$ over $k$) is dense in $kG$ \cite[II, 2.2.1--3]{lazard}.
\item If $G$ is pro-$p$, then $kG$ is a local ring \cite[Proposition 7.5.3]{wilson}.
\end{enumerate}
\end{rks}

\subsection{Ideals}

Let $k$ be a finite field of characteristic $p$ or a complete discrete valuation ring whose residue field is a finite field of characteristic $p$. Let $G$ be a $p$-valuable group.

Due to Remark \ref{rk: kG is profinite}(iii), we will say simply ``ideal" instead of ``closed ideal" throughout the remainder of this paper.

\begin{defn}
Let $H$ be a closed normal subgroup of $G$, and $I$ an ideal of $kH$. Then we will say $I$ is \emph{$G$-invariant} if it is invariant under conjugation by $G$. Furthermore, we will say that $I$ is \emph{$G$-prime} if it is $G$-invariant and has the property that, whenever $A$ and $B$ are $G$-invariant ideals of $kH$ with $AB\subseteq I$, we must have either $A\subseteq I$ or $B\subseteq I$.
\end{defn}

\begin{rk}\label{rk: filtered module}
If $I$ is a $G$-invariant ideal of $kH$, and $H$ is a closed normal subgroup of $G$, then $IkG$ is an ideal of $kG$.

If $H$ is also isolated normal, then $kG/IkG \cong (kH/I)[[x_1, \dots, x_e]]$ as a filtered $kH/I$-module: this may be proved along the same lines as the very similar results \cite[Proposition 3.6]{ardakovContemp} and \cite[Lemma 8.5]{ardakovInv}.
\end{rk}

\begin{defn}
If $I$ is an ideal of $kG$, we write $I^\dagger = \ker(G\to kG^\times\to (kG/I)^\times) = (I+1)\cap G$, and we say that $I$ is \emph{faithful} if $I^\dagger = \{1\}$.
\end{defn}

Note that, as $G$ is assumed $p$-valuable, it follows that $I^\dagger$ is an isolated normal closed subgroup of $G$ by \cite[Lemma 5.3(a), (c)]{ardakovInv}.

\begin{lem}
Let $H$ be an isolated normal closed subgroup of $G$, and let $I$ be a faithful $G$-invariant ideal of $kH$. Then $IkG$ is a faithful ideal of $kG$.
\end{lem}

\begin{proof}
The proof of \cite[8.6(b)]{ardakovInv} carries over to this situation \emph{mutatis mutandis}.
\end{proof}

The following is also well known:

\begin{lem}
Let $N$ be a closed normal subgroup of $G$. Then the augmentation ideal $w_N := \ker(kG \to k[[G/N]])$ is generated as a right ideal by $N-1$.
\end{lem}

\begin{proof}
This follows from \cite[Proposition 7.1.2]{wilson}, after taking Remark \ref{rk: kG is profinite}(iii) into account.
\end{proof}

\begin{lem}\label{lem: computation with augmentation ideals}
Let $N$ be an open normal subgroup of $G$, and write $N_2 = [G,N]$. Then, for all $r\in kN$ and $g\in G$, we have $grg^{-1} \in kN$ and $gr-rg\in (N_2-1)kG = w_{N_2}$.
\end{lem}

\begin{proof}
Let $n\in N$ and $g\in G$. Then $gng^{-1}n^{-1} \in N_2$ implies $gng^{-1}\in N\subseteq k[N]$ and $gn - ng = (gng^{-1}n^{-1} - 1)ng \in (N_2-1)k[G]$. It now follows for all $r\in k[N]$ that $grg^{-1}\in k[N]$ and $gr-rg \in (N_2-1)k[G]$ (as $r$ is just a finite $k$-linear combination of elements $n\in N$), and the lemma follows by taking closures inside $kG$ as in Remark \ref{rk: kG is profinite}(iv).
\end{proof}

\section{Inner automorphisms}

\subsection{Uniform subgroups}\label{subsec: uniform groups}

Before we prove Theorem A for general $p$-valuable groups $G$, we first prove the analogue for \emph{uniform} groups $G$, and so we recall below the relevant definitions and facts for the convenience of the reader.

\begin{defn} A (topologically) finitely generated pro-$p$ group is called \emph{powerful} if $[G,G] \leq G^p$ if $p$ is odd, or $[G,G]\leq G^4$ if $p=2$. \cite[Definition 3.1, Lemma 3.4]{DDMS}

A \emph{uniformly powerful} pro-$p$ group, or a \emph{uniform} group for short, is a powerful finitely generated pro-$p$ group which is torsion-free \cite[Theorem 4.5]{DDMS}.
\end{defn}

\begin{props}\label{props: uniform groups}
Let $G$ be uniform. Then the following properties hold:

\begin{enumerate}[label=(\roman*)]
\item For each $i\geq 1$, set $G_i = G^{p^{i-1}}$: this is a characteristic open subgroup of $G$ \cite[Theorem 3.6(ii)]{DDMS}. The filtration $G = G_1 > G_2 > \dots$ is a fundamental system of neighbourhoods of the identity for $G$ \cite[Proposition 1.16(iv)]{DDMS}, and also defines an integer-valued $p$-valuation on $G$ in the sense of \cite[III, 2.1.2]{lazard}. We also have $[G,G_i] \leq G_{i+1}$ \cite[Theorem 3.6(i)]{DDMS}.
\item There exists a binary operation $+$ on $G$ which makes $G$ into a $\mathbb{Z}_p$-module; moreover, any topological generating set for $G$ as a group is also a basis for $G$ as a $\mathbb{Z}_p$-module \cite[Definition 4.12, Theorem 4.17]{DDMS}. In additive notation, the subgroup $G_n$ defined above is equal to the $\mathbb{Z}_p$-submodule $p^{n-1} G$ \cite[Lemma 4.14(iii)]{DDMS}.
\item Let $H$ be an isolated normal closed subgroup of $G$. Then there exists a topological generating set $\{a_1, \dots, a_d\}$ for $G$ such that the subset $\{a_1, \dots, a_e\}$ (for some $e\leq d$) is a topological generating set for $H$: cf. \cite[proof of Lemma 8.5]{ardakovInv} or \cite[remark after Corollary 4.4]{woods-struct-of-G}.
\end{enumerate}
\end{props}

\begin{lem}\label{lem: action of uniform group}
Let $G$ be a uniform group and $H$ an isolated normal closed subgroup. Then $H_n = G_n \cap H$ and $[G,H_i] \subseteq H_{i+1}$.
\end{lem}

\begin{proof}
Choose a topological generating set $\{a_1, \dots, a_d\}$ for $G$ such that $\{a_1, \dots, a_e\}$ is a topological generating set for $H$ as in Property \ref{props: uniform groups}(iii).

Now consider $H$ and $G_n$ as $\mathbb{Z}_p$-modules. By Property \ref{props: uniform groups}(ii), $G_n$ has $\mathbb{Z}_p$-basis $\{p^n a_1, \dots, p^n a_d\}$, and so $G_n\cap H$ has basis $\{p^n a_1, \dots, p^n a_e\}$ by linear algebra. This shows that $G_n\cap H = H_n$.

Reverting to group notation: $H$ normal in $G$ by assumption, and so
$$[G, H_i] = [G, G_i \cap H] \leq [G, G_i] \cap [G, H] \leq G_{i+1} \cap H = H_{i+1}$$
using Property \ref{props: uniform groups}(i) as required.
\end{proof}

\begin{propn}\label{propn: from H_(i+1) to H_(i+2)}
Let $H$ be a uniform group, and fix some $i\geq 1$ and $a\in kH_i^\times$. Define the inner automorphism $\sigma$ of $kH$ by $\sigma(r) = ara^{-1}$. Let $T$ be a transversal to $H_{i+1}$ in $H_i$, so that we may write $a = \sum_{x\in T} a_x x$ for coefficients $a_x\in kH_{i+1}$.
\begin{enumerate}[label=(\roman*)]
\item If $\sigma(h)h^{-1}\in H_{i+1}$ for some $h\in H$, then $a_x = \sigma(h)a_x xh^{-1}x^{-1}$ for all $x\in T$.
\item If $\sigma(h)h^{-1}\in H_{i+1}$ for \emph{all} $h\in H$, then there exist an element $y\in H_i$ and $b\in kH_{i+1}^\times$ with the following properties. Define the inner automorphism $\tau$ of $kH$ by $\tau(r) = brb^{-1}$: then $\sigma(h) = \tau(yhy^{-1})$ and $\tau(h)h^{-1}\in H_{i+2}$ for all $h\in H$.
\end{enumerate}
\end{propn}

\begin{proof}
$ $

\begin{enumerate}[label=(\roman*)]
\item
Suppose we are given such an $h\in H$. Rearranging the equation $\sigma(h) = aha^{-1}$ gives $a = \sigma(h)ah^{-1}$, which we may rewrite as
\begin{equation}\label{eqn: a = sigma(h) a h^(-1)}
a = \sum_{x\in T} a_x x = \sum_{x\in T} \sigma(h)(a_x x)h^{-1} = \sum_{x\in T} (\sigma(h) a_x xh^{-1}x^{-1})x.
\end{equation}
As $kH_i = \bigoplus_{x\in T} (kH_{i+1})x$, we will be done if we can show that $\sigma(h) a_x xh^{-1}x^{-1}\in kH_{i+1}$. But
\begin{itemize}
\item $\sigma(h) a_x \sigma(h)^{-1}\in k{H_{i+1}}$ as $H$ normalises $k{H_{i+1}}$,
\item $\sigma(h) h^{-1} \in H_{i+1} \subseteq k{H_{i+1}}$ by assumption, and
\item $hxh^{-1}x^{-1}\in H_{i+1} \subseteq k{H_{i+1}}$ by Lemma \ref{lem: action of uniform group},
\end{itemize}
and the product of these three elements is $\sigma(h) a_x xh^{-1}x^{-1}$.
\item Since $a$ is invertible, some $a_x$ must also be invertible, as $kH_i$ is local. So fix $y\in T$ such that $a_y$ is invertible. Define $\tau\in\Inn(k{H_{i+1}})$ to be conjugation (on the left) by $a_y$, and define $\eta\in \Inn(H)$ to be conjugation (on the left) by $y$. Then eqn (\ref{eqn: a = sigma(h) a h^(-1)}) for $x=y$ says that $\sigma(h) = a_y y h y^{-1} a_y^{-1} = \tau(yhy^{-1})$ for all $h\in H$.

It remains to show that $\tau(h)h^{-1} \in H_{i+2}$. But $\tau(h)h^{-1} = a_yha_y^{-1}h^{-1}$, and so by Lemma \ref{lem: computation with augmentation ideals}, $\tau(h)h^{-1} - 1 = (a_yh-ha_y)a_y^{-1}h^{-1} \in w_{H_{i+2}}$. This means that $\tau(h)h^{-1} \in w_{H_{i+2}}^\dagger = H_{i+2}$.\qedhere
\end{enumerate}
\end{proof}

\begin{thm}\label{thm: inner automs for uniform groups}
Let $H$ be a uniform group, and let $\sigma$ be an inner automorphism of $kH$ that fixes $H$. Then $\sigma$ is the unique extension of an inner automorphism of $H$.
\end{thm}

\begin{proof}
Write $\sigma_1 = \sigma$. First, note as in the previous proof that $\sigma_1(h)h^{-1} - 1 \in w_{H_2}$ by Lemma \ref{lem: computation with augmentation ideals}, and so in particular $\sigma_1(h)h^{-1} \in w_{H_2}^\dagger = H_2$.

We now define $\sigma_n$ and $y_n$ inductively for all $n\geq 1$. Given an inner automorphism $\sigma_i$ of $kH$ such that $\sigma_i(h)h^{-1}\in H_{i+1}$ for all $h\in H$, Proposition \ref{propn: from H_(i+1) to H_(i+2)}(ii) will give an element $y_{i}\in H_i$ and an inner automorphism $\sigma_{i+1}$ of $kH$ such that $\sigma_i(h) = \sigma_{i+1}(y_{i}hy_{i}^{-1})$ and $\sigma_{i+1}(h)h^{-1}\in H_{i+2}$ for all $h\in H$. Now set $g_0 = 1$ and $g_{i+1} = y_{i+1} g_i$ for all $i\geq 0$, so that $\sigma(h) = \sigma_{i+1}(g_ihg_i^{-1})$ for all $h\in H$ and all $i\geq 0$. Then the sequence $(g_n)_{n\geq 0}$ converges inside $H$, say to some limit $g_\infty$, and $\sigma(h) = g_\infty hg_\infty^{-1}$ for all $h\in H$.
\end{proof}

\subsection{The general result}

Now let $G$ be a $p$-valuable group. The argument below is an extension of a standard argument for finite $p$-groups.

\textit{Proof of Theorem A.}
$G$ contains a characteristic uniform open subgroup $N$ \cite[Corollary 4.3]{DDMS}, and so we may write $kG = kN * F$ (a crossed product in the sense of \cite{passmanICP}), where $F$ is a finite $p$-group. Suppose that $\sigma$ is an inner automorphism of $kG$ which preserves $G$, say there exists $a\in kG^\times$ such that $\sigma(r) = ara^{-1}$ for all $r\in kG$.

This time, a slightly different notation will be convenient. Let $T$ be a transversal to $N$ in $G$, and for each $x\in G$, write $n_x$ and $t_x$ for the unique elements $n_x\in N$ and $t_x\in T$ satisfying $n_x t_x = x$. We will also index the coefficients of $a$ using the cosets, i.e. we will write 
$$a = \sum_{x \in T} a_{xN} \cdot x,$$
for some coefficients $a_{xN} \in kN$. In particular, if $T'$ is any (perhaps different) transversal, we still have
$$a = \sum_{x\in T} a_{xN} \cdot t_x = \sum_{y\in T'} a_{yN} \cdot t_y.$$

Given $g, x\in G$, we will write $g\hash x = \sigma(g) xg^{-1}$; note that $(g, xN) \mapsto (g\hash x)N$ is a well-defined left action of $G$ on the \emph{set} of cosets $G/N$. As $G$ is pro-$p$ and $G/N$ is a finite set, the orbits $G\hash xN$ must all have cardinality a power of $p$.

Fix an arbitrary $g\in G$. On the one hand, $a = \sigma(g)ag^{-1}$, or in other words
$$\sum_{x\in T} a_{xN}\cdot x = \sum_{x\in T} \sigma(g) a_{xN}\cdot x g^{-1} = \sum_{x\in T}  a_{xN}^{\sigma(g)^{-1}} n_{g\hash x}\cdot t_{g\hash x}.$$
On the other hand, by making the change of variables $x \mapsto g\hash x$, we can see that
$$\sum_{x\in T} a_{xN}\cdot x = \sum_{g\hash x \in g\hash T} a_{g\hash xN}\cdot t_{g\hash x}.$$
As before, this implies that
\begin{equation}\label{eqn: g hash x}
a_{xN}^{\sigma(g)^{-1}} n_{g\hash x} = a_{g\hash xN}
\end{equation}
for all $xN\in G/N$ and (as $g$ was arbitrary) for all $g\in G$.

Now consider the augmentation map $\varepsilon: kG \to k$ sending all $g\in G$ to $1$: applying $\varepsilon$ to equation (\ref{eqn: g hash x}) gives $\varepsilon(a_{xN}) = \varepsilon(a_{g\hash xN})$, and so in particular, if the orbit $G\hash yN$ of $yN\in G/N$ has size $p^r$, then
$$\sum_{xN \in G\hash yN} \varepsilon(a_{xN}) = p^r \varepsilon(a_{yN}).$$
But we can calculate $\varepsilon(a)$ as
$$\varepsilon(a) = \varepsilon\left( \sum_{x\in T} a_{xN} x\right) = \sum_{xN\in G/N} \varepsilon(a_{xN}) = \sum_{\substack{\text{distinct}\\ \text{orbits} \\ G\hash yN}}\left( \sum_{xN \in G\hash yN} \varepsilon(a_{xN})\right),$$
and note that this must be a unit as $a$ is invertible. In particular, some orbit $G\hash yN$ must have size 1, and of course we may choose this element $y$ to be an element of $T$, so that $t_y = t_{g\hash y} = y$ for all $g\in G$.

For this particular $y\in G$, equation (\ref{eqn: g hash x}) reads $a_{yN}^{\sigma(g)^{-1}} n_{g\hash y} = a_{yN}$ for all $g\in G$, and this can be rearranged to give that $\sigma(g) = a_{yN} ygy^{-1} a_{yN}^{-1}$. It now follows that the automorphism $g\mapsto \sigma(y^{-1}gy)$ of $kG$ is given by conjugation by an element of $kN^\times$.

Call this automorphism $\beta$. As $\sigma$ preserves $G$, so does $\beta$, and as $N$ is characteristic in $G$, $\beta$ must also preserve $N$. In particular, the restriction $\beta|_{kN}\in \Aut(kN)$ satisfies the hypotheses of Theorem \ref{thm: inner automs for uniform groups}: that is, there exists some $n\in N$ such that $\beta(h) = nhn^{-1}$ for all $h\in N$. But now, for every $g\in G$, there exists some $r$ such that $g^{p^r}\in N$, and so $\beta(g)^{p^r} = (ngn^{-1})^{p^r}$; and as $G$ is $p$-valuable, the desired result now follows from Property \ref{props: p-valuable groups}(v).
\qed

\textit{Proof of Corollary B.} As in the introduction, fix any element $g$ satisfying $G = \overline{\langle H, g\rangle}$: then setting $x = g - 1$ and letting $\sigma$ be conjugation by $g$ gives an equality of filtered left $kH$-modules $kG = kH[[x; \sigma, \sigma-\mathrm{id}]]$. If $\sigma$ is inner, then by Theorem A there exists $h\in H$ such that $grg^{-1} = hrh^{-1}$ for all $r\in kH$. But this means that $h^{-1}g$ centralises $H$, and so $G = \overline{\langle H, h^{-1}g\rangle} = H \times \overline{\langle h^{-1}g\rangle}$.\qed

\section{The action of $\Gamma$}

\subsection{Centralised group elements}

\checked\begin{lem}\label{lem: central group hom}
Let $G$ be a group, $\Gamma$ a subgroup, and $H$ a normal subgroup of $G$ centralising $\Gamma$. Suppose, for some fixed $g\in G$, we have that $x^{-1}gx\in Hg$ for all $x\in\Gamma$; that is, the function $\theta := \theta_g: \Gamma\to G$ defined by $\theta(x) = x^{-1}gxg^{-1}$ has image in $H$. Then $\theta$ is in fact a group homomorphism with image contained in $Z(H)$.
\end{lem}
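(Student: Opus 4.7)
The plan is to verify the two claims by direct computation, using only the structural hypotheses: that $H$ is normal in $G$, and that every element of $H$ commutes with every element of $\Gamma$.

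For the homomorphism property, I would expand
\[
\theta(xy) = (xy)^{-1} g (xy) g^{-1} = y^{-1}x^{-1}gxyg^{-1} = y^{-1}(x^{-1}gxg^{-1})(gyg^{-1}),
\]
and compare this with $\theta(x)\theta(y) = (x^{-1}gxg^{-1})(y^{-1}gyg^{-1})$. The middle factor $x^{-1}gxg^{-1}$ is precisely $\theta(x)$, which lies in $H$ by hypothesis; since $y \in \Gamma$ and $H$ centralises $\Gamma$, this factor commutes with $y^{-1}$. Pulling $y^{-1}$ across it matches the two expressions and gives $\theta(xy) = \theta(x)\theta(y)$.

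For the claim that the image lies in $Z(H)$, I would fix $h \in H$ and aim to show $\theta(x)h = h\theta(x)$. The key move is to write $h = g(g^{-1}hg)g^{-1}$: by normality of $H$, the conjugate $g^{-1}hg$ lies in $H$, so by the centralising hypothesis it commutes with $x$. Rearranging this identity shows that $gxg^{-1}$ commutes with $h$. Combining this with the fact that $x^{-1}$ also commutes with $h$ (another instance of the centralising hypothesis), one sees that $\theta(x) = x^{-1}(gxg^{-1})$ commutes with $h$ as well.

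I do not anticipate any serious obstacle, since the proof is essentially a pair of short calculations. The only subtle points are recognising the right moment in the first calculation to invoke the hypothesis $\theta(x) \in H$, and in the second calculation to exploit normality by conjugating $h$ into a form where the centralising hypothesis applies directly.
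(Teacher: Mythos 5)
Your proof is correct and follows essentially the same route as the paper: both parts are direct computations in which the hypothesis $\theta(x)\in H$ (together with $H$ centralising $\Gamma$) lets you slide $y^{-1}$ past $\theta(x)$ for the homomorphism property, and normality of $H$ plus the centralising hypothesis handles centrality. Your packaging of the second step --- writing $\theta(x)=x^{-1}\cdot(gxg^{-1})$ as a product of two elements each commuting with $h$ --- is a mild rearrangement of the paper's computation of $h^{-1}\theta(x)h$, not a genuinely different argument.
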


\begin{rk}
This is mentioned (without proof) in the context of linear groups by Roseblade in \cite{roseblade}; the proof is easy, but we have chosen to supply it.
\end{rk}

\begin{proof}
To see that $\theta$ is a group homomorphism, note that
$$\theta(x)\theta(y) = \boxed{x^{-1}g x g^{-1}}\,\boxed{y^{-1}}\,g y g^{-1},$$
and that the two boxed factors commute, as the first is an element of $H$ and the second is an element of $\Gamma$. This expression then simplifies to $\theta(x)\theta(y) = y^{-1}x^{-1}g xy g^{-1} = \theta(xy)$ as required. To see that the image is central in $H$, we compute, for arbitrary $h\in H$,
$$h^{-1} \theta(x) h = \boxed{h^{-1}}\, \boxed{x^{-1}}\, g x g^{-1} h,$$
noting again that the boxed expressions commute for the same reason as above. Now, as $H$ is normal in $G$, there exists some $h'\in H$ such that $h^{-1} g = gh'^{-1}$ and hence that $g^{-1} h = h'g^{-1}$. Making these substitutions, we can see that
$$h^{-1} \theta(x) h = x^{-1} gh'^{-1} x h'g.$$
Finally, $h'^{-1}xh' = x$, and so this equation reduces to $h^{-1} \theta(x) h = \theta(x)$.
\end{proof}

\textbf{For the remainder of this subsection,} let $G$ be a $p$-valuable group, $H$ a closed isolated normal subgroup (so that $G/H$ is $p$-valuable by Property \ref{props: p-valuable groups}(iv)), and $\Gamma$ a closed normal subgroup of $G$ acting on $G$ by conjugation and centralising $H$. If $I$ is a $G$-invariant ideal of $kH$, we will always view $kG/IkG$ as a filtered $kH/I$-module as in Remark \ref{rk: filtered module}.

\checked\begin{propn}\label{propn: finite to singleton}
Let $I$ be a $G$-invariant ideal of $kH$. Suppose that $y = a_1 g_1 + \dots + a_n g_n\in kG/IkG$ is centralised by $\Gamma$, where each $a_i\in kH/I$, and the $g_i$ are elements of $G$ which are pairwise distinct modulo $H$. Then $a_i g_i$ is centralised by $\Gamma$ for each $1\leq i\leq n$.
\end{propn}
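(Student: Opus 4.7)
The plan is to exploit the crossed-product structure of $RG/IRG$ over $RH/I$ relative to $G/H$. Since $H$ is normal in $G$ and $I$ is a $G$-stable ideal of $RH$, we have a direct-sum decomposition
$$RG/IRG = \bigoplus_{\overline{g}\in G/H} (RH/I)\cdot g$$
(choosing a coset representative $g$ for each $\overline{g}$), so that any finite expression $\sum_i a_i g_i$ is uniquely determined by its coefficients. Because $\Gamma$ centralises $H$, conjugation by any $x\in\Gamma$ fixes the copy of $RH/I$ in $RG/IRG$ pointwise, and hence
$$x^{-1}yx = \sum_{i=1}^n a_i (x^{-1} g_i x).$$
The invariance $x^{-1}yx=y$ then places constraints both on how $\Gamma$ permutes the cosets $\overline{g}_1,\dots,\overline{g}_n$ and on the coefficients $a_i$.

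First I would partition $\{\overline{g}_1,\dots,\overline{g}_n\}$ into orbits under the conjugation action of $\Gamma$ on $G/H$, yielding a decomposition $y = y_1 + \dots + y_r$ with each $y_k$ supported on a single orbit. Conjugation by $x$ preserves each orbit, so $x^{-1}y_k x$ is again supported on the same orbit; the pairwise disjointness of the supports, combined with uniqueness of the expansion, forces each $y_k$ to be $\Gamma$-invariant separately. It therefore suffices to prove the result when $\Gamma$ acts transitively on $\{\overline{g}_1,\dots,\overline{g}_n\}$, and the goal becomes to show $n=1$.

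Let $\overline{\Gamma}$ denote the image of $\Gamma$ in $G/H$: as a closed subgroup of the $p$-valuable group $G/H$, it is itself $p$-valuable. The conjugation stabiliser $C_{\overline{\Gamma}}(\overline{g}_1)$ has finite index $n$ in $\overline{\Gamma}$, so for any $\overline{x}\in\overline{\Gamma}$, pigeonhole applied to the cosets $\overline{x}^j C_{\overline{\Gamma}}(\overline{g}_1)$ produces a positive integer $m$ with $\overline{x}^m\in C_{\overline{\Gamma}}(\overline{g}_1)$. Hence $\overline{g}_1 \overline{x}^m \overline{g}_1^{-1} = \overline{x}^m$, i.e.\ $(\overline{g}_1 \overline{x} \overline{g}_1^{-1})^m = \overline{x}^m$, so $\overline{x}$ and its conjugate $\overline{g}_1\overline{x}\overline{g}_1^{-1}$ have a common $m$-th power. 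Unique extraction of $m$-th roots in the $p$-valuable group $G/H$ now forces $\overline{g}_1\overline{x}\overline{g}_1^{-1}=\overline{x}$; as $\overline{x}\in\overline{\Gamma}$ was arbitrary, $\overline{g}_1$ is centralised by $\overline{\Gamma}$, so the orbit has size $1$.

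The main obstacle is this last step, where the hypothesis that $G/H$ be $p$-valuable (rather than just torsion-free, or a general compact $p$-adic analytic group) is used in an essential way: it is precisely this hypothesis that supplies unique $m$-th roots, via the $\log$ map identifying the group with a torsion-free $\mathbb{Z}_p$-Lie algebra on which multiplication by any nonzero integer is injective (see, e.g., \cite{lazard} or \cite{DDMS}). Once this is granted, each $\Gamma$-orbit contributes a single term $a_i g_i$ to $y$, and this term is centralised by $\Gamma$, as required.
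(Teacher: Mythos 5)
Your proof is correct and follows essentially the same route as the paper: reduce to showing that a finite $\Gamma$-orbit on $G/H$ must be trivial, then observe that some power of each $\overline{x}$ lands in the stabiliser of $\overline{g}_1$ and conclude by unique extraction of roots in the $p$-valuable group $G/H$. The only cosmetic difference is that the paper notes the stabiliser has $p$-power index $p^f$ (so that $x^{p^f}$ stabilises $gH$ and only unique $p^f$-th roots are needed), whereas your pigeonhole argument produces a general exponent $m$ and therefore requires unique $m$-th roots for arbitrary $m$ --- still valid via the $\log$ map, as you indicate, but marginally more than the standard fact.
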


\begin{proof}
$\Gamma$ permutes the submodules $(kH/I)g_i$, which means that it must permute the summands $a_ig_i$ and also the cosets $Hg_i$ in the same way. We will prove that, if $Hg\in G/H$ has \emph{finite} $\Gamma$-orbit, then it has \emph{trivial} $\Gamma$-orbit: this will suffice.

Let $S$ be the stabiliser in $\Gamma$ of $Hg$. Then $S$ has finite index in $\Gamma$, so its normal core $S^\circ := \bigcap_{\gamma\in\Gamma} S^\gamma$ has finite index in $\Gamma$ too, say index $p^f$, as $\Gamma$ is orbitally sound by Properties \ref{props: p-valuable groups}(iii), (iv). Hence, for all $x\in\Gamma$, we have $x^{p^f}\in S^\circ \subseteq S$, and so
$$(Hg^{-1}xg)^{p^f} = Hg^{-1}x^{p^f}g = Hx^{p^f},$$
an equality inside $G/H$. But now applying Property \ref{props: p-valuable groups}(v) to this equation shows that $x$ fixes $Hg$, i.e. $S = \Gamma$.
\end{proof}

\checked\begin{propn}\label{propn: ring elt to group elt}
Let $I$ be a faithful $G$-prime ideal of $kH$. Let $g\in G$ be such that $x^{-1}gx\in Hg$ for all $x\in\Gamma$. Suppose that, for some nonzero $a\in kH/I$, the element $ag\in kG/IkG$ is centralised by $\Gamma$. Then $g$ is also centralised by $\Gamma$.
\end{propn}

\begin{proof}
Since $I$ is faithful, we may view $G$ as a subgroup of $(kG/IkG)^\times$. Define $\theta: \Gamma\to G$ by $\theta(x) = x^{-1}gxg^{-1}$, so that by Lemma \ref{lem: central group hom} we have $\theta(x) \in Z(H)$ for all $x\in \Gamma$. We will prove that in fact $\theta(x) = 1$ for all $x\in \Gamma$.

Write $I$ as the intersection of a finite $G$-orbit of prime ideals $J_1, \dots, J_s$ of $kH$, which is possible due to \cite[Theorem 3.18]{letzter-noeth-skew}. As $a$ is assumed nonzero inside $kH/I$, we can take $a = \tilde{a} + I$ for some $\tilde{a}\in kH\setminus I$; and $\tilde{a}\not\in I$ implies that $\tilde{a}\not\in J_i$ for at least one $1\leq i\leq s$. Choose such a $J_i$, and denote it simply by $J$.

Take $x\in \Gamma$ with $\theta(x) = z\in Z(H)$. This means that
$$ag = x^{-1}(ag) x = zag,$$
so that $(z-1)\tilde{a}\in I\subseteq J$. Now, as $z$ is central in $H$, this implies that $(z-1)\cdot kH\cdot \tilde{a}\subseteq J$; and since $J$ is prime, we must now have $z-1\in J$, i.e. $z\in J^\dagger$.

As the $G$-orbit of $J$ is finite, the closed subgroup $J^\dagger$ is orbital in $G$, and we may calculate its normal core in $G$ as $(J^\dagger)^\circ = \bigcap_{g\in G} (J^\dagger)^g = J_1^\dagger\cap \dots\cap J_s^\dagger = I^\dagger$, which is $\{1\}$ by assumption. But now Property \ref{props: p-valuable groups}(ii) tells us that the index $[J^\dagger:(J^\dagger)^\circ]$ is finite, and so $J^\dagger$ is a finite subgroup of $G$. Finally, $G$ is torsion-free by Property \ref{props: p-valuable groups}(i), so $J^\dagger = \{1\}$. In particular, this tells us that $z = 1$ as required.
\end{proof}

At this stage, we note the following for later use. Let $H$ be the centraliser in $G$ of $\Gamma$. Let $T$ be an arbitrary transversal to $H$ in $G$, and write $R = \bigoplus_{t\in T} (kH/I)t$. (Note that $R$ is a \emph{subring} of $kG/IkG$, as $H$ is normal in $G$.)

\begin{propn}\label{propn: fixed rings of partial completions}
If $I$ is a faithful $G$-prime ideal of $kH$, then $\C_\Gamma(R) = kH/I$.
\end{propn}

\begin{proof}
The inclusion ``$\supseteq$" is clear.

For the converse, suppose that the element $y = a_1 t_1 + \dots + a_n t_n\in R$ is centralised by $\Gamma$, where each $a_i\in kH/I$ is nonzero, and the $t_i$ are distinct elements of $T$. Then, as $G/H$ is $p$-valuable by Lemma \ref{lem: G/H is p-val}, we may apply Proposition \ref{propn: finite to singleton} to see that the elements $a_it_i\in R$ are centralised by $\Gamma$ for each $1\leq i\leq n$.

Now let $x\in \Gamma$ and $1\leq i\leq n$ be arbitrary, and write $a = a_i$ and $t = t_i$. Set $t^x = s\in G$. Then $at = (at)^{x} \in ((kH/I) t)^{x} = (kH/I) s$ implies (by intersecting both sides with the naturally embedded copy of $G$) that $Hs = Ht$, and so in particular $x^{-1}tx = s \in Hs = Ht$. As $x$ was arbitrary, Proposition \ref{propn: ring elt to group elt} shows that $t$ is centralised by $\Gamma$, i.e. $t\in H$. And as $i$ was also arbitrary, and the $t_i$ were assumed distinct mod $H$, we must have $n=1$ and $y = a_1t_1\in kH/I$.
\end{proof}

\subsection{Choosing a compatible family of transversals}\label{subsec: transversals}

Let $G$ be a $p$-valuable group, and $H$ a closed, isolated, normal subgroup of $G$.

In what follows, it will be convenient to fix a descending sequence $G = G_1 \supseteq G_2 \supseteq G_3 \supseteq \dots$ of open normal subgroups of $G$ such that $\bigcap_{n\geq 1} G_n = \{1\}$. This can be done using e.g. \cite[Proposition 1.16(iii)]{DDMS}. This sequence (viewed as an inverse system over the directed set of positive integers) is cofinal with the inverse system of \emph{all} open normal subgroups of $G$, and so $G \cong \invlim_i (G/G_i)$ by Properties \ref{props: profinite objects}(ii), (iii). For each $i\geq 1$, and each closed subgroup $K$ of $G$, write $K_i = G_i\cap K$. 

In this section, we will choose a family of ``compatible" transversals to $H_i$ in $G_i$ for all $i \geq 1$, and use them to construct a transversal to $H$ in $G$ compatible with the quotient maps. In particular, we will write the following:

\centerline{
\xymatrix{
G\ar@{=}[r]\ar[d]_{\rho}&\invlim_i (G/G_i)\ar[r]^-{\varphi_n}&G/G_n\ar[r]^{\pi_n}\ar[d]_{\rho_n}& G/G_{n-1}\ar[d]^{\rho_{n-1}}\\
G/H\ar@{=}[r]&\invlim_i (G/HG_i)\ar[r]_-{\overline{\varphi}_n}&G/HG_n\ar[r]_{\overline{\pi}_n}& G/HG_{n-1}
}
}

Here, $\rho, \rho_n, \pi_n, \overline{\pi}_n$ are the natural quotient maps for each $n$, the inverse limits run over $\pi_n$ and $\overline{\pi}_n$, and $\varphi_n, \overline{\varphi_n}$ are the natural projection maps. This diagram commutes.

A transversal to $H$ in $G$ is a subset $T\subseteq G$ such that, for each $g\in G$, there exists a unique $t\in T$ satisfying $Hg = Ht$: in other words, we want a map of sets $\tau: G/H\to G$ splitting the surjective map $\rho$, so that $T = \mathrm{im}(\tau)$.

\checked\begin{propn}\label{propn: transversals}
There exists a sequence of transversals $T_n$ to $HG_n/G_n$ in $G/G_n$ such that $\pi_n(T_n) = T_{n-1}$ for all $n$.
\end{propn}

\begin{proof}
We proceed by induction. The case $n = 1$ is trivial: $G_1 = G$, so $HG_1/G_1 = G/G_1 = \{G_1\}$ is the trivial group, which is its own transversal, so we set $T_1 = G/G_1$.

Suppose now that $n > 1$, and that $T_{n-1}$ is a transversal to $HG_{n-1}/G_{n-1}$ in $G/G_{n-1}$: that is, we have a splitting $\tau_{n-1}$ of the surjective map $\rho_{n-1}$. Choose an arbitrary splitting $\sigma$ of the surjective map $\pi_n$, and define the map $\tau_n = \sigma \tau_{n-1} \overline{\pi}_n: G/HG_n\to G/G_n$. A diagram chase shows that
\begin{align*}
\tau_n \rho_n \tau_n &= (\sigma \tau_{n-1} \overline{\pi}_n) \rho_n (\sigma \tau_{n-1} \overline{\pi}_n) & \text{by definition}\\
&= \sigma \tau_{n-1} \rho_{n-1} \pi_n \sigma \tau_{n-1} \overline{\pi}_n & \text{by commutativity of the square}\\
&= \sigma \tau_{n-1} \rho_{n-1} \tau_{n-1} \overline{\pi}_n & \text{as } \sigma \text{ splits } \pi_n\\
&= \sigma \tau_{n-1} \overline{\pi}_n & \text{as } \tau_{n-1} \text{ splits } \rho_{n-1}\\
&= \tau_n &\text{by definition,}
\end{align*}
and as $\tau_n$ is injective, it is left-cancellable, so that $\rho_n \tau_n = \mathrm{id}_{G/HG_n}$. This tells us that $\tau_n$ does in fact split $\rho_n$, and we may set $T_n = \mathrm{im}(\tau_n)$. Now $\pi_n(T_n) = \mathrm{im}(\pi_n \tau_n) = \mathrm{im}(\pi_n \sigma \tau_{n-1} \overline{\pi}_n)$, and as $\pi_n\sigma = \mathrm{id}_{G/G_{n-1}}$ and $\overline{\pi}_n$ is surjective, this is just equal to $\mathrm{im}(\tau_{n-1}) = T_{n-1}$.
\end{proof}

\checked\begin{cor}\label{cor: inverse limit of transversals}
The inverse limit $T := \invlim_i(T_i)$ (where the $T_i$ are viewed as discrete topological spaces) is a transversal to $H$ in $G$, and $\varphi_n(T) = T_n$ for all $n$.\qed
\end{cor}

We will make crucial use of this transversal later.

\section{The lifting procedure}

\subsection{Twisted permutation modules}

The proof of Theorem C essentially follows \cite[Propositions 2.1 and 2.2]{ardakovDocumenta}, but as we are no longer dealing with straightforward permutation modules, it requires some extra setup.

\begin{defn}
Let $A$ be a ring, $M$ a free left $A$-module with basis $X$, and $\Gamma$ a group acting $A$-linearly on $M$. If, for every $\gamma\in\Gamma$ and $x\in X$, there exist unique $x'\in X$ and $a\in A^\times$ such that $x' = ax^\gamma$, we call $M$ a \emph{twisted permutation module} for $\Gamma$, and denote it $A^t[X]$. We will say $A^t[X]$ has \emph{finite orbits} if the $\Gamma$-orbit of every element of $A^t[X]$ is finite.
\end{defn}

\begin{notn}
Throughout this section, we will have a group $\Gamma$ acting on various sets $S$.
\begin{enumerate}[label=(\roman*),noitemsep]
\item The action of an element $\gamma\in\Gamma$ will always be written as a right superscript, i.e. we will write the action map $S\times \Gamma \to S$ as $(s, \gamma) \mapsto s^\gamma$.
\item The subgroup of $\Gamma$ stabilising the element $s\in S$ will be written $\Gamma_s$. The orbit of $s\in S$ will be written $s^\Gamma$.
\item The subset of $S$ which is centralised by $\Gamma$ will be denoted $\C_\Gamma(S)$: this is non-standard, but we hope that it will cause no confusion in practice.
\item If $\Gamma$ acts on an additive group $A$, and $|a^\Gamma|$ is finite for some $a\in A$, then we will write $\widehat{a}$ for the sum of its $\Gamma$-orbit, i.e. $\sum_{b\in a^\Gamma} b$. Equivalently, if $T$ is any right transversal to $\Gamma_a$ in $\Gamma$, then $\widehat{a} = \sum_{\gamma\in T} a^\gamma$.
\end{enumerate}
\end{notn}

\begin{lem}\label{lem: fixed module is sum}
Let $A^t[X]$ be a twisted permutation module for $\Gamma$ with finite orbits. Then $\C_\Gamma(A^t[X]) = \sum_{x\in X} A\widehat{x}$.
\end{lem}

\begin{proof}
The inclusion ``$\supseteq$" is obvious. For the reverse inclusion: let $r = a_1x_1 + \dots + a_nx_n\in \C_\Gamma(A^t[X])$, where $a_i\in A$ and the $x_i$ are pairwise distinct elements of $X$. Then $\Gamma$ must permute the set $\{a_1x_1, \dots, a_nx_n\}$, and so it falls into disjoint $\Gamma$-orbits $o_1, \dots, o_s$, so that $r = \widehat{o}_1 + \dots + \widehat{o}_s$. It suffices to show that each $\widehat{o}_i$ can be written as $a\widehat{x}$ for some $a\in A$ and $x\in X$. To this end, suppose (renumbering without loss of generality) that $o_i = \{a_1x_1, \dots, a_mx_m\}$: but then $a_1x_1 + \dots + a_mx_m = \widehat{a_1x_1} = a_1\widehat{x}_1$.
\end{proof}

Given any $x\in X$, we will denote by $M(x)$ the submodule $\sum_{\gamma\in\Gamma} Ax^\gamma\subseteq A^t[X]$.

\begin{lem}\label{lem: hat submodules are equal or disjoint}
Let $A^t[X]$ be a twisted permutation module for $\Gamma$ with finite orbits, and let $x_1$ and $x_2\in X$. If there exist $a\in A^\times$ and $h\in\Gamma$ such that $x_1 = ax_2^h$, then $\widehat{x}_1 = a\widehat{x}_2$ and $A\widehat{x}_1 = A\widehat{x}_2$. If not, then $M(x_1)\cap M(x_2) = \{0\}$.
\end{lem}

\begin{proof}
Let $\mathcal{S} = \{Ax\}_{x\in X}$ be the set of $A$-submodules of $A^t[X]$ each generated by a single element of $X$: note that the definition of twisted permutation module ensures that $\Gamma$ acts on $\mathcal{S}$.

If $Ax_1$ and $Ax_2$ are in the same orbit, then $Ax_1 = Ax_2^h$ for some $h\in\Gamma$, and so $x_1 = ax_2^h$ for some $a\in A$; similarly $Ax_2 = Ax_1^{h^{-1}}$ and so $x_2 = bx_1^{h^{-1}}$ for some $b\in A$, implying that $x_1 = abx_1$ and so $a$ is invertible. In this case, $\Gamma_{x_2} = h\Gamma_{x_1} h^{-1}$, and so if $T$ is a right transversal to $\Gamma_{x_1}$ in $\Gamma$, then $hT$ is a right transversal to $\Gamma_{x_2}$ in $\Gamma$. Hence
$$\widehat{x}_1 = \sum_{\beta \in T} x_1^\beta = \sum_{\beta \in T} ax_2^{h\beta} = \sum_{h\beta \in hT} ax_2^{h\beta} = a\widehat{x}_2,$$
and so $A\widehat{x}_1 = A\widehat{x}_2$. Otherwise, $Ax_1$ and $Ax_2$ are in disjoint orbits: as the $x\in X$ are $A$-linearly independent by definition, this ensures that $M(x_1)$ and $M(x_2)$ intersect only in the zero submodule.
\end{proof}

\begin{rk}\label{rk: projections}
We draw an immediate conclusion from Lemmas \ref{lem: fixed module is sum} and \ref{lem: hat submodules are equal or disjoint}. Elements of $\C_\Gamma(A^t[X])$ can be written as $a_1 \widehat{x}_1 + \dots + a_r \widehat{x}_r$, for coefficients $a_i\in A$ and elements $x_i\in X$, and the $x_i$ can be chosen so that the $M(x_i)$ have pairwise zero intersection. Moreover, $\C_\Gamma(A^t[X])$ is the direct sum of the $M(x_i)$ (with duplicates removed), so for each $x\in X$ there is a well-defined projection map $\pr_{M(x)} : \C_\Gamma(A^t[X]) \to M(x)$.

Note that, by definition, $A\widehat{x} \subseteq M(x)$ for all $x\in X$. In particular, in this language, Lemma \ref{lem: hat submodules are equal or disjoint} says the following: given $x,z\in X$, $\pr_{M(x)}(\widehat{z}) \neq 0$ if and only if there exist $a\in A^\times$ and $h\in \Gamma$ such that $z = ax^h$, and in this case $\pr_{M(x)}(\widehat{z}) = \widehat{z}$.
\end{rk}

\begin{defn}
Let $c\in \C_\Gamma(A^t[X])$ and $x\in X$. We will write $\widehat{x} \in \mathrm{supp}(c)$ if $\pr_{M(x)}(c) \neq 0$.
\end{defn}

\begin{lem}\label{lem: hat counting}
Let $A^t[X]$ and $B^t[Y]$ be twisted permutation modules for $\Gamma$ with finite orbits, and suppose that we have a map of $\Gamma$-modules $f: B^t[Y] \to A^t[X]$. Fix $x\in X$ and $y\in Y$, and suppose that $f(y) = x$. Then $f(\widehat{y}) = n\widehat{x}$, where $n = |y^\Gamma|/|x^\Gamma|$.
\end{lem}

\begin{proof}
As $f$ is a map of $\Gamma$-sets, we have $\Gamma_y\subseteq \Gamma_x$. Then, writing $S$ for a right transversal of $\Gamma_x$ in $\Gamma$ and $T$ for a right transversal of $\Gamma_y$ in $\Gamma$, we have 
$$f(\widehat{y}) = \sum_{\gamma\in T} f(y)^\gamma = \frac{|T|}{|S|}\sum_{\gamma\in S} x^\gamma = \frac{|T|}{|S|}\widehat{x},$$
and $|T| = |y^\Gamma|$ and $|S| = |x^\Gamma|$ by the orbit-stabiliser theorem.
\end{proof}

Finally, we come to the main proposition of the section:

\begin{propn}\label{propn: lifting}
Let $A^t[X]$ and $B^t[Y]$ be twisted permutation modules for $\Gamma$ with finite orbits, and suppose that we have a map of $\Gamma$-modules $f: B^t[Y] \to A^t[X]$. Assume also that $f(Y) \subseteq X$. Suppose that we are given $c\in \C_\Gamma(B^t[Y])$ and $x\in X$ such that $\widehat{x} \in \mathrm{supp}(f(c))$.

If $A$ has characteristic $p>0$ and $\Gamma$ is a pro-$p$ group, then there exists $y\in Y$ such that $f(y) = x$, $|y^\Gamma| = |x^\Gamma|$ and $\widehat{y} \in \mathrm{supp}(c)$.
\end{propn}

\begin{proof}
Let
$$c = b_1 \widehat{y}_1 + \dots + b_s \widehat{y}_s,\qquad f(c) = a_1\widehat{x}_1 + \dots + a_r\widehat{x}_r,$$
where $a_i\in A$, $b_i\in B$, $x_i\in X$ and $y_i\in Y$, such that the $M(x_i)\subseteq A^t[X]$ have pairwise zero intersection and the $M(y_i) \subseteq B^t[Y]$ have pairwise zero intersection. Suppose without loss of generality that $x = x_i$, and write $a = a_i$ so that $\pr_{M(x)}(f(c)) = a\widehat{x}\neq 0$. But
$$\pr_{M(x)}(f(b_1 \widehat{y}_1 + \dots + b_s \widehat{y}_s)) = f(b_1) n_1 \cdot \pr_{M(x)}(\widehat{z}_1) + \dots + f(b_s) n_s \cdot \pr_{M(x)}(\widehat{z}_s),$$
where $f(y_j) = z_j$ and $n_j = |y_j^\Gamma|/|z_j^\Gamma|$, by Lemma \ref{lem: hat counting}. Now there must be some $1\leq j\leq s$ such that $f(b_j)n_j \cdot \pr_{M(x)}(\widehat{z}_j) \neq 0$. For this $j$, we must have $b_j\neq 0$ and $n_j\neq 0$, and by Lemma \ref{lem: hat submodules are equal or disjoint} and Remark \ref{rk: projections}, $\pr_{M(x)}(\widehat{z}_j) \neq 0$ implies that there exist $h\in\Gamma$ and $u\in A^\times$ such that $f(y_j) = z_j = ux^h$, so that $|z_j^\Gamma| = |x^\Gamma|$.

Set $y'$ to be equal to this $y_j$, and let $y\in Y$ be such that $y = v(y')^{h^{-1}}$ for some $v\in B^\times$. It is easy to see that $|y^\Gamma| = |y'^\Gamma|$ and $f(y) = x$, and by definition $\widehat{y}\in \mathrm{supp}(c)$ if and only if $\widehat{y'}\in\mathrm{supp}(c)$.

It remains to show that $|z_j^\Gamma| = |y_j^\Gamma|$, i.e. $n_j = 1$. However, as $\Gamma$ is a pro-$p$ group, $n_j$ must be a power of $p$, and as $A$ has characteristic $p$, all powers of $p$ are either $0$ or $1$ in $A$. As we know $n_j\neq 0$, we must have $n_j = 1$. So we may take $y = y_j$.
\end{proof}

\subsection{Lifting orbits}

Let $G$ be a $p$-valuable group, $\Gamma$ an arbitrary closed normal subgroup which we consider acting on $G$ (on the right) by conjugation, and $H$ the centraliser in $G$ of $\Gamma$. Recall that this implies that $G/H$ is $p$-valuable by Lemma \ref{lem: centralisers are normal}. Fix also a faithful $G$-prime ideal $I\lhd kH$.

We fix further notation below for the remainder of this section.

\begin{notn}
$ $

\begin{enumerate}
\item Fix a descending sequence $G = G_1 \supseteq G_2 \supseteq G_3 \supseteq \dots$ of open normal subgroups of $G$ such that $\bigcap_{n\geq 1} G_n = \{1\}$. For each $i\geq 1$, write $H_i = G_i\cap H$.
\item Write $\varphi_n: G\to G_n$ and $\pi_n: G/G_n \to G/G_{n-1}$ for the natural projection maps.
\item Let $(T_n)_{n\geq 1}$ be a sequence of transversals to $HG_n/G_n$ in $G/G_n$ such that $\pi_n(T_n) = T_{n-1}$ for all $n\geq 2$ as in Proposition \ref{propn: transversals}, and let $T$ be the corresponding inverse limit transversal to $H$ in $G$ from Corollary \ref{cor: inverse limit of transversals}.
\item For each $i\geq 1$, denote $A_i = kH/(w_{H_i} + I)$ and $R_i = kG/(w_{G_i} + IkG)$.
\end{enumerate}
\end{notn}

\begin{rk}
It follows from Lemma \ref{lem: inv lim of quotients} and Remark \ref{rk: kG is profinite}(iii) that the maps $R_{n+1}\to R_n$ form an inverse system with inverse limit $kG/IkG$.
\end{rk}

As before, we view $kG/IkG$ as a filtered left $kH/I$-module.

\checked\begin{defn}\label{defn: R and S}
We define the following left $kH/I$-submodule of $kG/IkG$:
$$R = \bigoplus_{t\in T}(kH/I)t.$$
Note that, as $H$ is \emph{normal} in $G$, in fact $R$ is a \emph{subring} of $kG/IkG$.
\end{defn}

\checked\begin{lem}\label{lem: spanning set of finite rings}
$R_n = A_n^t[T_n]$ is a twisted permutation module for $\Gamma$.
\end{lem}

\begin{proof}
Note first that $H/H_n \cong HG_n/G_n \lhd G/G_n$, so that $k[G/G_n]$ can be written as a crossed product $k[H/H_n]*F$, where $F$ is the finite group $G/HG_n$. Now, writing $\theta: kG\to k[G/G_n]$, we have $\theta(IkG) = \theta(I)k[G/G_n] = \theta(I)\cdot(k[H/H_n]*F)$. Now, as $\theta(I)$ is assumed $F$-stable, this is equal to $\theta(I)*F$ by \cite[Lemma 1.4(ii) and its proof]{passmanICP}. Hence, again by \cite[Lemma 1.4(ii)]{passmanICP}, we have
$$R_n = k[G/G_n]/\theta(IkG) \cong (k[H/H_n]*F){\big /}(\theta(I)*F) \cong A_n*F.$$
Taking $T_n$ as a basis for the crossed product inside $R_n$ yields the desired result.
\end{proof}

Now let $f_i: R_i\to R_{i-1}$ be the natural projection map for each $i\geq 2$. We have the data of an inverse system of $\Gamma$-modules:

\centerline{
\xymatrix{
\dots\ar[r]& R_{n+1}\ar[r]^-{f_{n+1}}& R_n\ar[r]^-{f_{n}}& R_{n-1}\ar[r]& \dots\ar[r]& R_1.
}
}

This gives a cone over the diagram of $\Gamma$-fixed rings:

\centerline{
\xymatrix{
&&\C_\Gamma(kG/IkG)\ar[dl]_-{\varphi_{n+1}}\ar[d]^{\varphi_{n}}\ar[dr]^-{\varphi_{n-1}}\\
\dots\ar[r]& \C_\Gamma(R_{n+1})\ar[r]^-{f_{n+1}}& \C_\Gamma(R_n)\ar[r]^-{f_{n}}& \C_\Gamma(R_{n-1})\ar[r]& \dots
}
}

It is now easy to check that we are in a position to apply Proposition \ref{propn: lifting}.

\begin{thm}
$\C_\Gamma(kG/IkG) = \overline{\C_\Gamma(R)}$.
\end{thm}

\begin{proof}
\textbf{Case 1:} $\mathrm{char}(k) = p > 0$.

Take some nonzero element $\alpha \in \C_\Gamma(kG/IkG)$. Set $\alpha^{(i)} = \alpha$ and proceed inductively for all $i\geq 1$. If $\alpha^{(i)} = 0$, set $\xi^{(i)} = 0$. Otherwise, let $r$ be the minimal positive integer such that $\varphi_r(\alpha^{(i)}) \in \C_\Gamma(R_r)$ is nonzero.

Now, using Proposition \ref{propn: lifting}, we can lift each $x_r\in T_r$ such that $\widehat{x}_r\in \mathrm{supp}(\varphi_r(\alpha^{(i)}))$ to an $x_{r+1}\in T_{r+1}$ whose $\Gamma$-orbit has the same size and such that $\widehat{x}_{r+1}\in \mathrm{supp}(\varphi_{r+1}(\alpha^{(i)}))$. Repeating this procedure gives a sequence of elements $x_r, x_{r+1}, x_{r+2}, \dots$ such that $f_{r+j}(x_{r+j}) = x_{r+j-1}$ for all $j\geq 1$, and so this sequence determines a unique element $x\in T$ whose $\Gamma$-orbit has the same size and such that $\varphi_r(\widehat{x}) = \widehat{x}_r$. Lifting every $\widehat{x}_r\in \mathrm{supp}(\varphi_r(\alpha^{(i)}))$ in this way, we see that there exists $\xi^{(i)}\in \C_\Gamma(R)$ such that $\varphi_r(\xi^{(i)}) = \varphi_r(\alpha^{(i)})$. Now set $\alpha^{(i+1)} = \alpha^{(i)} - \xi^{(i)}$ so that $\varphi_r(\alpha^{(i+1)}) = 0$, and repeat.

Now the sum $\alpha = \xi^{(1)} + \xi^{(2)} + \xi^{(3)} + \dots $ converges in $kG/IkG$, and each term is in $\C_\Gamma(R)$, so $\alpha\in \overline{\C_\Gamma(R)}$.

\textbf{Case 2:} $\mathrm{char}(k) = 0$.

Let $\pi$ be a uniformiser for $k$ (which must now be a complete discrete valuation ring). Take some nonzero element $\alpha\in \C_\Gamma(kG/IkG)$. Set $\alpha^{(0)} = \alpha$ and proceed inductively for all $i\geq 0$ as follows. By reducing $\alpha^{(i)}\in \C_\Gamma(kG/IkG)$ modulo $\pi$, we are back in Case 1, so the argument of Case 1 gives us an element $\beta^{(i)}\in \overline{\C_\Gamma(R)}$ such that $\alpha^{(i)} + \pi \C_\Gamma(kG/IkG) = \beta^{(i)} + \pi \C_\Gamma(kG/IkG)$: that is, there exists some $\alpha^{(i+1)}\in \C_\Gamma(kG/IkG)$ such that $\alpha^{(i)} = \beta^{(i)} + \pi \alpha^{(i+1)}$.

Now the sum $\alpha = \beta^{(0)} + \pi \beta^{(1)} + \pi^2\beta^{(2)} + \dots$ converges in $kG/IkG$, and each term is in $\overline{\C_\Gamma(R)}$, so $\alpha\in\overline{\C_\Gamma(R)}$.
\end{proof}

We can now join this with Proposition \ref{propn: fixed rings of partial completions}.

\begin{thm}\label{thm: fixed ring over F_p}
If $I$ is faithful and $G$-prime, then $\C_\Gamma(kG/IkG) = kH/I$.
\end{thm}

\begin{proof}
Let $R$ be as above, so that $\C_\Gamma(kG/IkG)$ is the closure in $kG/IkG$ of $\C_\Gamma(R)$; and by Proposition \ref{propn: fixed rings of partial completions}, $\C_\Gamma(R) = kH/I$, which is already closed in $kG/IkG$.
\end{proof}

This proves Theorem C.

\section{Applications and open questions}

\subsection{Ideal separation conditions}\label{subsec: ideal separation}

$G$ will continue to denote a $p$-valuable group throughout.

In the usual way, we will say that:
\begin{itemize}
\item $kG$ is \emph{polycentral} if, for all two-sided ideals $A\lneq B$ of $kG$, the ideal $B/A$ contains a nonzero central element of $kG/A$.
\item $kG$ is \emph{centrally separated} if, for all two-sided \emph{prime} ideals $A\lneq B$ of $kG$, the ideal $B/A$ contains a nonzero central element of $kG/A$.
\end{itemize}
(For the reader's convenience, we note that polycentrality is equivalent to \emph{hypercentrality} in this context since $kG$ is right noetherian, and that both terms appear commonly in the literature.)

\textit{Proof of Corollary D.} Let $P$ be a faithful prime ideal of $kG$, and set $I = P\cap kZ$ and $Q = IkG$.

It will suffice to prove that $Q$ is a prime ideal of $kG$. Indeed, by Theorem C (with $\Gamma = G$), we can calculate that $Z(kG/Q) = (Q+kZ)/Q$, and hence $$(P/Q)\cap Z(kG/Q) = (P/Q) \cap (Q+kZ)/Q = (Q + (P\cap kZ))/Q = Q/Q,$$ which is the zero subring of $kG/Q$. (The second equality follows from the modular law.) But $Q \leq P$: so, if $Q$ is prime, as we have assumed that $kG$ is centrally separated, we must have $P = Q$.

So we now prove that $Q$ is a prime ideal of $kG$. Firstly, $I$ is a prime ideal of $kZ$, from which it follows by commutative algebra (see e.g. \cite[Chapter II, Exercise 4.11]{hartshorne}) that the ring $kZ/I$ admits a discrete valuation $v$ with respect to which it is complete. Without loss of generality, let $v$ have image in $\mathbb{N}\cup\{\infty\}$.

Now we may realise $kG/Q$ as an iterated skew power series ring over $kZ/I$ using \cite[3.14(iv)]{letzter-noeth-skew}, and as $G/Z$ is supersoluble by assumption, this iterated extension is \emph{triangular} in the sense of \cite[cf. Example 2.16]{woods-SPS-dim}. We may extend $v$ to a valuation $\tilde{v}$ on $kG/Q$ using the same argument as \cite[Theorem E]{woods-SPS-dim}. This shows that $Q$ is a prime ideal of $kG$.\qed

\subsection{Some parallels}\label{subsec: parallels}

Let $H$ be a \emph{discrete} group and $\mathbb{Z}[H]$ its integral group ring. It is known \cite[Corollary]{roseblade-integral-group-rings} that $\mathbb{Z}[H]$ is polycentral if and only if $H$ is finitely generated and nilpotent, and the paper \cite{roseblade-smith-hypercentral} contains similar results over arbitrary coefficient rings.

A similar fact holds for finite-dimensional Lie algebras $\mathfrak{g}$ over a field of characteristic $0$: in this case, $U(\mathfrak{g})$ is polycentral if and only if $\mathfrak{g}$ is nilpotent \cite[Theorem A.3.3]{jategaonkar}.

The above facts about group rings of polycyclic groups were crucially exploited in \cite[Corollary 11]{roseblade} to understand their full prime ideal structure. We are currently unable to prove an analogue of these facts for Iwasawa algebras, but we can immediately mimic a small part of the argument in our case:

\begin{propn}\label{propn: polycentral soluble implies nilpotent}
Suppose that the $p$-valuable group $G$ contains a nontrivial abelian normal subgroup $A$, and that $\mathbb{Z}_p G$ is polycentral. Then $G$ is nilpotent.
\end{propn}

\begin{proof}
We will show that such a $p$-valuable group must have a nontrivial centre $Z$. Then $\mathbb{Z}_p[[G/Z]]$ will remain polycentral, as it is a quotient of $\mathbb{Z}_p G$; and as $Z$ is known to be isolated in $G$ \cite[Lemma 8.4(a)]{ardakovInv}, it will follow that $G/Z$ is still $p$-valuable \cite[IV, 3.4.2]{lazard} of lower rank, so the statement follows by induction on $\mathrm{rank}(G)$.

Let $\mathfrak{a} = \ker(\mathbb{Z}_p G\to \mathbb{Z}_p[[G/A]])$ and $\mathfrak{g} = \ker(\mathbb{Z}_p G\to \mathbb{Z}_p)$. Consider the function $\theta: A\to \mathfrak{a}/\mathfrak{ga}$ given by $\theta(a) = (1-a) + \mathfrak{ga}$: then a very similar proof to that in \cite[\S 2.2, Lemma 1]{GruRos72} shows that $\theta$ is a $\mathbb{Z}_p$-module isomorphism. (Recall that, if $A$ is written multiplicatively, it is a $\mathbb{Z}_p$-module under \emph{exponentiation}, so that the $\mathbb{Z}_p$-linearity conditions read as follows: for $a,b\in A$ and $\lambda\in \mathbb{Z}_p$, we have $\theta(ab) = \theta(a)+\theta(b)$ and $\theta(a^\lambda) = \lambda \theta(a)$.)

The proof now proceeds as in \cite[\S 2.1(c)]{roseblade-integral-group-rings}. As $\mathbb{Z}_pG$ is polycentral, there exists an element $\eta\in\mathfrak{a}\setminus \mathfrak{ga}$ such that $\eta + \mathfrak{ga}$ is central in $\mathbb{Z}_pG/\mathfrak{ga}$. Choose the element $a\in A$ such that $\theta(a) = \eta+\mathfrak{ga}$: then, for all $g\in G$, we have
$$\theta(gag^{-1}) = (1 - gag^{-1}) + \mathfrak{ga} = g(\eta + \mathfrak{ga})g^{-1} = \eta + \mathfrak{ga} = \theta(a),$$
from which we may conclude that $gag^{-1} = a$.
\end{proof}

\textit{Proof of Theorem E.} This follows from Proposition \ref{propn: polycentral soluble implies nilpotent}, as soluble groups have nontrivial abelian normal subgroups.\qed

The hypothesis that $G$ contain a nontrivial abelian normal subgroup, appearing in Proposition \ref{propn: polycentral soluble implies nilpotent}, does not appear to be necessary in the classical cases of discrete groups or finite-dimensional Lie algebras in characteristic 0. Encouraged by the partial evidence provided by Proposition \ref{propn: polycentral soluble implies nilpotent} and the results of \cite{ardakovInv}, we conjecture:

\begin{conj}
If $G$ is $p$-valuable, then $kG$ is polycentral if and only if $G$ is nilpotent.
\end{conj}

\bibliography{../../biblio/biblio-all}
\bibliographystyle{plain}

\end{document}